\theoremstyle{plain}
\newtheorem{teo}{Theorem}[section]
\newtheorem{lemma}[teo]{Lemma}
\newtheorem{prop}[teo]{Proposition}
\newtheorem{ackn}{Acknowledgments\!}
\theoremstyle{definition}
\theoremstyle{remark}
\newtheorem{rem}[teo]{Remark}
\numberwithin{equation}{section}
\def\SS{{{\mathbb S}}}
\def\RR{{\mathbb R}}
\def\RRR{{\mathrm R}}
\def\PPP{{\mathrm P}}
\def\WWW{{\mathrm W}}
\def\Ric{{\mathrm {Ric}}}
\title[Complete gradient shrinking Ricci solitons with pinched curvature]{Complete gradient shrinking Ricci solitons\\ with pinched curvature}
\date{\today}
\author{Giovanni Catino}
\address[Giovanni Catino]{SISSA -- International School for Advanced Studies, \mbox{Via Bonomea 265,   34136 Trieste}}
\email{catino@sissa.it}
\date{\today}
\begin{document}

\begin{abstract} We prove that any $n$--dimensional complete gradient Ricci soliton with pinched Weyl curvature is a finite quotient of $\RR^{n}$, $\RR \times \SS^{n-1}$ or $\SS^{n}$. In particular, we do not need to assume the metric to be locally conformally flat.
\end{abstract}

\maketitle

\section{Introduction}

In this paper we classify complete gradient shrinking Ricci solitons satisfying a pointwise pinching condition. We recall that a complete Riemannian manifold $(M^n ,g)$ is a {\em gradient Ricci solitons} if there exists a smooth function $f$ on $M^n$ such that 
$$
\Ric + \nabla^2 f \, = \, \lambda g
$$ 
for some constant $\lambda$. The Ricci soliton is called {\em shrinking} if $\lambda>0$, {\em steady} if $\lambda=0$ and {\em expanding} if $\lambda<0$. Ricci solitons play a fundamental role in the formation of singularities of the Ricci flow, and have been studied by many authors (see H.-D. Cao~\cite{cao2} for a nice overview). 

\medskip 

In recent years much attention has been given to the classification of complete gradient shrinking solitons, in particular in dimensions $n = 3$ (Ivey~\cite{ivey1} for the compact case and Perelman~\cite{perel1}, Ni--Wallach~\cite{nw2} and Cao--Chen--Zhu~\cite{caochenzhu} for the complete case), in dimension $n = 4$ (Ni--Wallach~\cite{nw1} and Naber~\cite{na1}), as well as in the locally conformal flat case when $n \geq 4$ (Ni--Wallach~\cite{nw2}, Petersen--Wylie~\cite{pw3} and Z.-H. Zhang~\cite{zhang}). In all three cases, the proofs depend on the crucial fact that the shrinking soliton under the consideration has nonnegative curvature operator. Notice that this condition is automatically satisfied when $n = 3$ (B.-L. Chen~\cite{chen2}) or when the shrinker is locally conformally flat (Z.-H. Zhang~\cite{zhang}). However, when $n \geq 4$ one cannot expect that in general a complete gradient shrinking soliton has nonnegative curvature operator (or even nonnegative Ricci curvature).

\medskip

In this paper we will generalize the previous results concerning the classification of complete gradient shrinking Ricci solitons to the case when the Ricci tensor is nonnegative and a very general pinching condition on the Weyl tensor is in force. In particular we will not assume the soliton metric to be  locally conformally flat. 

\medskip

To fix the notation we recall that the Riemann curvature operator of a Riemannian manifold $(M^n,g)$ is defined as in~\cite{gahula} by
$$
\mathrm{Riem}(X,Y)Z=\nabla_{Y}\nabla_{X}Z-\nabla_{X}\nabla_{Y}Z+\nabla_{[X,Y]}Z\,.
$$ 
In a local coordinate system the components of the $(3,1)$--Riemann curvature tensor are given by
$\RRR^{l}_{ijk}\tfrac{\partial}{\partial
  x^{l}}=\mathrm{Riem}\big(\tfrac{\partial}{\partial
  x^{i}},\tfrac{\partial}{\partial
  x^{j}}\big)\tfrac{\partial}{\partial x^{k}}$ and we denote by
$\RRR_{ijkl}=g_{lp}\RRR^{p}_{ijk}$ its $(4,0)$--version.

\medskip

{\em In all the paper the Einstein convention of summing over the repeated indices will be adopted.}

\medskip

The Ricci tensor $\Ric$ is obtained by the contraction 
$\RRR_{ik}=g^{jl}\RRR_{ijkl}$, $\RRR=g^{ik}\RRR_{ik}$ will denote the scalar curvature and $\overset{\circ}{\Ric}=\Ric-\tfrac{1}{n}\RRR\, g$ the traceless Ricci tensor. The so called Weyl tensor $\WWW$ is then defined by the following decomposition formula (see~\cite[Chapter~3,
Section~K]{gahula}) in dimension $n\geq 3$,
\begin{eqnarray*}
\WWW_{abcd}=&\,\RRR_{abcd}+\frac{\RRR}{(n-1)(n-2)}(g_{ac}g_{bd}-g_{ad}g_{bc})
- \frac{1}{n-2}(\RRR_{ac}g_{bd}-\RRR_{ad}g_{bc}
+\RRR_{bd}g_{ac}-\RRR_{bc}g_{ad})\,.
\end{eqnarray*} 

\medskip 

Now we can state our result

\begin{teo}\label{mainteo} Any $n$--dimensional complete gradient shrinking Ricci soliton with nonnegative Ricci curvature and satisfying
\begin{equation}\label{pinching}
|\WWW| \, \RRR \,  \leq \, \sqrt{\frac{2(n-1)}{n-2}} \left(\, |\overset{\circ}{\Ric}| - \tfrac{1}{\sqrt{n(n-1)}} \RRR \, \right)^{2}
\end{equation}
is a finite quotient of $\RR^{n}$, $\RR \times \SS^{n-1}$ or $\SS^{n}$. 
\end{teo}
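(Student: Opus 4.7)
The plan is to show that the pinching condition (\ref{pinching}) forces the Weyl tensor to vanish identically; once $\WWW \equiv 0$ the soliton is locally conformally flat, and the classification theorems of Ni--Wallach~\cite{nw2}, Petersen--Wylie~\cite{pw3}, and Z.-H.~Zhang~\cite{zhang} immediately give that $(M,g)$ is a finite quotient of $\RR^{n}$, $\RR\times\SS^{n-1}$, or $\SS^{n}$.

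After normalizing $\lambda = 1/2$, I would use Hamilton's standard identities for gradient shrinking Ricci solitons---in particular $\Delta f + \RRR = n/2$ and $\RRR + |\nabla f|^{2} = f$ (up to an additive constant)---together with the weighted elliptic equations
$$
\Delta_{f}\RRR = \RRR - 2|\Ric|^{2}, \qquad \Delta_{f}\RRR_{ij} = \RRR_{ij} - 2\,\RRR_{ikjl}\RRR^{kl},
$$
where $\Delta_{f} u := \Delta u - \langle \nabla f, \nabla u\rangle$ is Perelman's drift Laplacian. Tracing and combining these produces weighted Bochner-type equations for $|\overset{\circ}{\Ric}|^{2}$ and $|\WWW|^{2}$ whose sign-indefinite cubic curvature terms have the schematic forms $\Ric \ast \overset{\circ}{\Ric} \ast \overset{\circ}{\Ric}$ and $\WWW \ast \Ric \ast \Ric$.

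The heart of the argument is then to combine these Bochner-type formulas into a single weighted differential inequality $\Delta_{f} F \geq 0$ for a nonnegative scalar $F$ built from $|\WWW|$, $|\overset{\circ}{\Ric}|$ and $\RRR$ in such a way that (\ref{pinching}) precisely forces $F \leq 0$ pointwise. The sharp constants $\sqrt{2(n-1)/(n-2)}$ and $1/\sqrt{n(n-1)}$ appearing in (\ref{pinching}) should come out as the optimal values in a Huisken-type algebraic inequality bounding the cubic contraction $\WWW_{ikjl}\RRR^{ij}\RRR^{kl}$ in terms of $|\WWW|\,|\overset{\circ}{\Ric}|^{2}$, with the hypothesis $\Ric \geq 0$ used to control the sign-indefinite Weyl--Ricci couplings. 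Given such an inequality, the weighted maximum principle on the complete shrinker---justified by the Cao--Zhou estimate $f \sim r^{2}/4$, which makes $e^{-f}$ decay Gaussianly---forces $F \equiv 0$, and inspecting the equality case yields $\WWW \equiv 0$.

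The main obstacle I expect is precisely this sharp algebraic step: pinning down the right test scalar $F$ and establishing the matching Huisken--Margerin-style pointwise curvature inequality that genuinely uses $\Ric \geq 0$ and is saturated exactly on $\RR^{n}$, $\RR\times\SS^{n-1}$, and $\SS^{n}$. Once that algebraic input is in place, the Bochner computation and the weighted maximum principle should proceed along well-established lines for gradient shrinking Ricci solitons.
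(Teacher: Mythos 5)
Your proposal assembles several of the right ingredients --- Huisken's sharp bound $|\WWW_{ijkl}\overset{\circ}{\RRR}{}^{ik}\overset{\circ}{\RRR}{}^{jl}|\le\sqrt{\tfrac{n-2}{2(n-1)}}\,|\WWW|\,|\overset{\circ}{\Ric}|^2$, the Cao--Zhou estimate $f\sim r^2/4$ for the weighted integrability, and a drift-Laplacian Liouville argument --- but the overall strategy has a genuine gap. The pinching~\eqref{pinching} does \emph{not} force $\WWW\equiv 0$: at a point where $\overset{\circ}{\Ric}=0$ it only gives $|\WWW|^2\le\tfrac{2}{n^2(n-1)(n-2)}\RRR^2$, which is perfectly compatible with $\WWW\neq 0$, so no pointwise algebraic consequence of~\eqref{pinching}, and hence no maximum-principle argument for a scalar $F$ built from $|\WWW|$, $|\overset{\circ}{\Ric}|$, $\RRR$, can yield $\WWW\equiv0$ by itself. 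The actual argument applies the Liouville theorem to $u=|\Ric|^2/\RRR^2$ with the \emph{modified} drift $h=f-\log\RRR^2$ (the extra $-\log\RRR^2$ is what makes the gradient terms close up into $\langle\nabla u,\nabla h\rangle$), concludes that $u$ is constant, and reads off from the equality case of the cubic estimate on $\overset{\circ}{\RRR}_{ij}\overset{\circ}{\RRR}{}^{jk}\overset{\circ}{\RRR}{}^{i}_{\ k}$ that either $|\overset{\circ}{\Ric}|=\tfrac{1}{\sqrt{n(n-1)}}\RRR$ --- in which case the pinching does kill $\WWW$ and the locally conformally flat classification applies --- or $g$ is Einstein. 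The Einstein branch is then handled not by the maximum principle but by a separate rigidity theorem: the residual bound $|\WWW|^2\le\tfrac{2}{n^2(n-1)(n-2)}\RRR^2$ implies positive curvature operator by Huisken's pinching lemma, and Tachibana's theorem gives constant sectional curvature, hence a finite quotient of $\SS^n$. Your proposal omits this entire branch, and it is not recoverable from the scheme you describe.

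Two further points. First, involving $|\WWW|^2$ in the Bochner machinery is a technical dead end here: unlike $\RRR$ and $\RRR_{ik}$, the Weyl tensor does not satisfy a usable elliptic identity on a general shrinker, and the proof never computes $\Delta|\WWW|^2$; all the analysis is carried by the equations for $\Delta\RRR$ and $\Delta\RRR_{ik}$ together with the purely algebraic inequality $\RRR_{ij}\RRR^{jk}\RRR^{i}_{\ k}\ge-\tfrac{n-2}{\sqrt{n(n-1)}}|\overset{\circ}{\Ric}|^3+\tfrac3n\RRR|\Ric|^2-\tfrac{2}{n^2}\RRR^3$. Second, the hypothesis $\Ric\ge0$ is not used to control sign-indefinite Weyl--Ricci couplings (Huisken's lemma needs no sign assumption); its only role is to give $|\Ric|\le\RRR$ so that $u$ is in $L^2(e^{-h}dV_g)$ with $e^{-h}dV_g=\RRR^2e^{-f}dV_g$, which is why it can be weakened to $|\Ric|\le c\,\RRR^{1+\alpha}$.
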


\begin{rem}\label{rem1} The pinching~\eqref{pinching} is trivially satisfied in dimension three, since the Weyl tensor vanishes, whereas if $n\geq 4$, it generalizes all the previous results concerning locally conformally flat gradient shrinkers. Moreover this condition does not imply a priori the nonnegativity of the curvature operator.
\end{rem}

\begin{rem}\label{rem2} As we will see in the proof,  the condition on the Ricci curvature can be relaxed to an estimate of the type $|\Ric|\leq c\, \RRR^{1+\alpha}$, for some constants $c>0$ and $\alpha\geq 0$. In particular we do not need to assume a priori any kind of positivity on the Ricci curvature.
\end{rem}

\bigskip

\section{Proof of Theorem~\ref{mainteo}} 

We recall the following formulas (for the proof see Eminenti--La Nave--Mantegazza~\cite{mantemin2}) which will be useful in the rest of this section

\begin{lemma}\label{formulas} Let $(M^{n},g)$ be a gradient Ricci soliton, then the following formulas hold
\begin{equation}\label{eq1}
\Delta\RRR \,= \,\langle\nabla\RRR , \nabla
f\rangle+2\,\lambda\,\RRR-2|\Ric|^2
\end{equation}
\begin{align}
\Delta\RRR_{ik}
\,=\,&\,\langle\nabla\RRR_{ik}, \nabla
f\rangle+2\,\lambda\,\RRR_{ik}-2\,\WWW_{ijkl}\RRR^{jl}\label{eq2}\\
&\,+\tfrac{2}{(n-1)(n-2)}
\bigl(\RRR^2 g_{ik}-n\RRR\,\RRR_{ik}
+2(n-1)\RRR_{ij}\RRR^{j}_{\,k}-(n-1)|\Ric|^{2} g_{ik}\bigr)\,.\nonumber
\end{align}
\end{lemma}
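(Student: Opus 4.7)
The plan is to prove that the pinching \eqref{pinching} together with the soliton structure forces the Weyl tensor to vanish identically, and then to conclude via the established classification of locally conformally flat shrinking solitons due to Ni--Wallach~\cite{nw2}, Petersen--Wylie~\cite{pw3} and Z.-H. Zhang~\cite{zhang}.

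The core computation is a Bochner-type identity. Setting $\Delta_f := \Delta - \langle\nabla f,\nabla\,\cdot\,\rangle$ and decomposing $\RRR_{ij} = \overset{\circ}{\RRR}_{ij} + (\RRR/n)g_{ij}$, I would combine \eqref{eq1} and \eqref{eq2} together with $\Delta_f|\overset{\circ}{\Ric}|^2 = 2\overset{\circ}{\RRR}^{ik}\Delta_f\RRR_{ik} + 2|\nabla\overset{\circ}{\Ric}|^2 - (2/n)\RRR\,\Delta_f\RRR - (2/n)|\nabla\RRR|^2$ to derive an identity of the schematic form
$$
\Delta_f|\overset{\circ}{\Ric}|^2 \,=\, 2|\nabla\overset{\circ}{\Ric}|^2 + 4\lambda|\overset{\circ}{\Ric}|^2 - 4\WWW_{ijkl}\overset{\circ}{\RRR}^{ik}\overset{\circ}{\RRR}^{jl} + \tfrac{8}{n-2}\tr(\overset{\circ}{\Ric}^{\,3}) - \tfrac{4(n-2)}{n(n-1)}\RRR\,|\overset{\circ}{\Ric}|^2.
$$
The two ``bad'' tensorial terms are controlled by the sharp algebraic inequalities of Huisken, $|\WWW_{ijkl}\overset{\circ}{\RRR}^{ik}\overset{\circ}{\RRR}^{jl}| \leq \sqrt{(n-2)/(2(n-1))}\,|\WWW|\,|\overset{\circ}{\Ric}|^2$, and of Okumura/Hamilton, $|\tr(\overset{\circ}{\Ric}^{\,3})| \leq \tfrac{n-2}{\sqrt{n(n-1)}}|\overset{\circ}{\Ric}|^3$. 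Substituting the pinching \eqref{pinching} to eliminate $|\WWW|$ produces a striking cancellation: the cross term $-2\RRR|\overset{\circ}{\Ric}|/\sqrt{n(n-1)}$ arising from expanding the square in \eqref{pinching} exactly annihilates the cubic Okumura contribution, yielding the clean differential inequality
$$
\Delta_f|\overset{\circ}{\Ric}|^2 \,\leq\, 2|\nabla\overset{\circ}{\Ric}|^2 + 4\lambda|\overset{\circ}{\Ric}|^2 + \tfrac{4|\overset{\circ}{\Ric}|^4}{\RRR} - \tfrac{4(n-3)}{n(n-1)}\RRR\,|\overset{\circ}{\Ric}|^2.
$$

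The next move is to combine this with $\Delta_f\RRR = 2\lambda\RRR - 2|\Ric|^2$ to control a scale-invariant quantity such as $v = |\overset{\circ}{\Ric}|^2/\RRR$ or $w = |\overset{\circ}{\Ric}|^2/\RRR^2$. Multiplying by a cut-off $\chi$ and integrating against $e^{-f}\,dV$, the Cao--Zhou asymptotics $f \sim r^2/4$ and the polynomial volume growth of the shrinker justify taking $\chi \uparrow 1$ and discarding boundary terms; the growth hypothesis on $|\Ric|$ (in the form of Remark~\ref{rem2}) is what makes the quartic term $|\overset{\circ}{\Ric}|^4/\RRR$ integrable against the weight. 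The goal is to assemble an identity of the form $\int_M (\text{nonneg gradient})\,e^{-f} + \int_M (\text{nonneg zeroth order})\,e^{-f} \leq 0$, forcing equality at every point both in the pinching \eqref{pinching} and in the Huisken and Okumura inequalities.

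Once rigidity is achieved, equality in Okumura constrains $\overset{\circ}{\Ric}$ to have at most two distinct eigenvalues with multiplicities $1$ and $n-1$, while equality in Huisken together with the saturated pinching forces $\WWW\equiv 0$. The metric is then locally conformally flat, and the classification of complete gradient shrinking LCF solitons identifies it as a finite quotient of $\RR^n$, $\RR\times\SS^{n-1}$, or $\SS^n$. I expect the genuine obstacle to lie in the integration step: the sign of the coefficient $-4(n-3)/(n(n-1))$ degenerates at $n=3$ (where however \eqref{pinching} is trivial and the result is classical by Ivey/Perelman/Cao--Chen--Zhu), is weak at $n=4$, and one must choose the auxiliary scalar combination precisely so that, after integrating by parts, the remaining bulk reassembles into a sum of squares rather than an indefinite expression.
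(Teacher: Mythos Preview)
Your proposal does not address the stated lemma at all. The lemma asks only for the two elliptic identities \eqref{eq1}--\eqref{eq2} satisfied by the scalar curvature and the Ricci tensor on a gradient soliton; in the paper these are not derived but simply quoted from Eminenti--La~Nave--Mantegazza. What you have written is instead an outline of the proof of Theorem~\ref{mainteo}, treating \eqref{eq1} and \eqref{eq2} as given inputs. A proof of the lemma itself would proceed by tracing and commuting derivatives in the soliton equation $\RRR_{ij}+\nabla_i\nabla_j f=\lambda g_{ij}$, using the contracted second Bianchi identity and the commutator $[\nabla_i,\nabla_j]$ to obtain $\Delta_f\RRR_{ik}=2\lambda\RRR_{ik}-2\RRR_{ijkl}\RRR^{jl}$, and then rewriting $\RRR_{ijkl}$ via the Weyl decomposition.

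As a side remark on your sketch of the main theorem: the overall strategy is close to the paper's, but the paper organizes the argument differently. Rather than integrating a cutoff against a differential inequality for $|\overset{\circ}{\Ric}|^2$ and trying to assemble a sum of squares, the paper computes directly that $\Delta_h(|\Ric|^2/\RRR^2)\ge 0$ with $h=f-\log\RRR^2$, and then invokes the Petersen--Wylie Liouville theorem for $\Delta_h$-subharmonic $L^2(e^{-h}dV)$ functions. This sidesteps the sign issue you flag with the coefficient $-4(n-3)/(n(n-1))$: the choice of the quotient $|\Ric|^2/\RRR^2$ makes the zeroth-order term reassemble exactly into the nonnegative quantity $|\overset{\circ}{\Ric}|^2(|\overset{\circ}{\Ric}|-\RRR/\sqrt{n(n-1)})^2$ minus the Weyl term, with no leftover indefinite piece. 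The rigidity conclusion is also slightly different: the paper does not force $\WWW\equiv 0$ in all cases, but rather splits into an Einstein branch (handled by Tachibana via positive curvature operator) and a $\WWW=0$ branch.
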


\medskip

It is well known that a complete gradient shrinking Ricci soliton has nonnegative scalar curvature (see B.-L. Chen~\cite{chen2}). On the other hand, due to the geometric properties of gradient shrinkers, we know that either $\RRR>0$ or the metric $g$ is flat (see Pigola--Rimoldi--Setti~\cite[Theorem 3]{pigrimset}). 

\medskip

Hence, from now on we can assume the scalar curvature of $g$ to be strictly positive.

\begin{prop}\label{propstima} Let $(M^{n},g)$ be a complete non--flat gradient shrinking Ricci soliton. Then the following estimate holds
\begin{align*}
\Delta \left(\frac{|\Ric|^{2}}{\RRR^{2}}\right) \, \geq \, & \, \langle \nabla \left(\frac{|\Ric|^{2}}{\RRR^{2}}\right), \nabla f - \nabla \log \RRR^{2} \rangle + \frac{2}{R^{4}}\,|\,\RRR \,\nabla_{j} \RRR_{ik} - \RRR_{ik}\,\nabla_{j} \RRR\,|^{2} \\
&\, +\, \frac{4}{\RRR^{3}}\left[ \,|\overset{\circ}{\Ric}|^{2} \left( |\overset{\circ}{\Ric}| - \tfrac{1}{\sqrt{n(n-1)}}\RRR\right)^{2} - \RRR \,\WWW_{ijkl}\hbox{$\overset{\circ}{\RRR}$}\,\hspace{-0.05cm}^{ik} \overset{\circ}{\RRR}\,\hspace{-0.05cm}^{jl}\,\right] \,.
\end{align*}
\end{prop}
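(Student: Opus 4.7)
The plan is to expand $\Delta(|\Ric|^{2}/\RRR^{2})$ by the quotient rule and substitute the Bochner-type identities from Lemma~\ref{formulas}. Setting $h := |\Ric|^{2}/\RRR^{2}$, the quotient rule yields
\begin{equation*}
\Delta h \,=\, \frac{\Delta |\Ric|^{2}}{\RRR^{2}} \,-\, \frac{2|\Ric|^{2}}{\RRR^{3}}\Delta \RRR \,-\, \frac{4\langle \nabla |\Ric|^{2},\nabla \RRR\rangle}{\RRR^{3}} \,+\, \frac{6|\Ric|^{2}|\nabla \RRR|^{2}}{\RRR^{4}},
\end{equation*}
and $\Delta |\Ric|^{2} = 2\RRR^{ik}\Delta \RRR_{ik} + 2|\nabla \Ric|^{2}$ is obtained by contracting \eqref{eq2} with $2\RRR^{ik}$. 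I expect the $4\lambda|\Ric|^{2}/\RRR^{2}$ contributions coming from \eqref{eq1} and \eqref{eq2} to cancel, the $\nabla f$ contributions to assemble as $\langle \nabla h,\nabla f\rangle$, and the combination of the leftover $\nabla \RRR$ contributions with the term $\langle \nabla h,\nabla \log \RRR^{2}\rangle$ (moved from the left-hand side) to produce the advection by $\nabla f - \nabla \log \RRR^{2}$ in the statement.

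The residual gradient-squared piece,
\begin{equation*}
\frac{2|\nabla \Ric|^{2}}{\RRR^{2}} \,-\, \frac{2\langle \nabla |\Ric|^{2},\nabla \RRR\rangle}{\RRR^{3}} \,+\, \frac{2|\Ric|^{2}|\nabla \RRR|^{2}}{\RRR^{4}},
\end{equation*}
coincides exactly with $(2/\RRR^{4})\,|\RRR \nabla_{j}\RRR_{ik} - \RRR_{ik}\nabla_{j}\RRR|^{2}$, once one uses $\nabla_{j}|\Ric|^{2} = 2\RRR^{ik}\nabla_{j}\RRR_{ik}$ to identify the cross term. This is an equality and produces the nonnegative squared-gradient correction in the statement without loss.

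For the remaining pointwise curvature terms I would decompose $\RRR_{ij} = \overset{\circ}{\Ric}_{ij} + (\RRR/n)g_{ij}$. Trace-freeness of $\WWW$ in every pair immediately gives $\WWW_{ijkl}\RRR^{ik}\RRR^{jl} = \WWW_{ijkl}\overset{\circ}{\Ric}\,^{ik}\overset{\circ}{\Ric}\,^{jl}$, producing the Weyl term in the desired form. Substituting $|\Ric|^{2} = |\overset{\circ}{\Ric}|^{2} + \RRR^{2}/n$ and $\tr(\Ric^{3}) = \tr(\overset{\circ}{\Ric}^{3}) + (3\RRR/n)|\overset{\circ}{\Ric}|^{2} + \RRR^{3}/n^{2}$ inside
\begin{equation*}
\frac{4}{(n-1)(n-2)\RRR^{2}}\bigl[\RRR^{3} - (2n-1)\RRR|\Ric|^{2} + 2(n-1)\tr(\Ric^{3})\bigr] \,+\, \frac{4|\Ric|^{4}}{\RRR^{3}},
\end{equation*}
I expect, after coefficient bookkeeping, the pure $\RRR$ contributions to cancel completely and the remainder to reduce to
\begin{equation*}
\frac{4|\overset{\circ}{\Ric}|^{4}}{\RRR^{3}} \,+\, \frac{4|\overset{\circ}{\Ric}|^{2}}{n(n-1)\RRR} \,+\, \frac{8\,\tr(\overset{\circ}{\Ric}^{3})}{(n-2)\RRR^{2}}.
\end{equation*}

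The main obstacle — and the only step that is not an equality — is converting the last display into a perfect square. I would apply the sharp Okumura-type bound
\begin{equation*}
\tr(\overset{\circ}{\Ric}^{3}) \,\geq\, -\frac{n-2}{\sqrt{n(n-1)}}\,|\overset{\circ}{\Ric}|^{3},
\end{equation*}
valid for any trace-free symmetric $(0,2)$-tensor in dimension $n$ and proved by an elementary eigenvalue argument. Substituting completes the three-term sum into $(4|\overset{\circ}{\Ric}|^{2}/\RRR^{3})\bigl(|\overset{\circ}{\Ric}| - \RRR/\sqrt{n(n-1)}\bigr)^{2}$, matching the right-hand side of the proposition. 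The delicate point is the rational-coefficient algebra of the preceding paragraph; once the $\RRR$-only terms are seen to cancel and the $|\overset{\circ}{\Ric}|^{2}/\RRR$ coefficient is verified to equal $4/(n(n-1))$, the Okumura inequality closes everything.
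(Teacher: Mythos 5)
Your proposal is correct and follows essentially the same route as the paper: compute $\Delta\RRR^{2}$ and $\Delta|\Ric|^{2}$ from Lemma~\ref{formulas}, combine via the quotient rule so that the $\lambda$-terms cancel, the $\nabla f$ and $\nabla\log\RRR^{2}$ advection terms assemble, and the gradient remainder becomes exactly $\tfrac{2}{\RRR^{4}}|\RRR\nabla_{j}\RRR_{ik}-\RRR_{ik}\nabla_{j}\RRR|^{2}$, with the remaining algebraic terms bounded below by the perfect square via the Okumura--Huisken estimate $\tr(\overset{\circ}{\Ric}{}^{3})\geq-\tfrac{n-2}{\sqrt{n(n-1)}}|\overset{\circ}{\Ric}|^{3}$. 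The only cosmetic difference is that the paper keeps the algebraic block as a single quantity $\PPP$ in terms of $\Ric$ and substitutes the traceless decomposition at the end, whereas you decompose first; your coefficient bookkeeping (cancellation of the pure $\RRR$ terms and the coefficient $\tfrac{4}{n(n-1)}$ on $|\overset{\circ}{\Ric}|^{2}/\RRR$) checks out.
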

\begin{proof}
From Lemma~\ref{formulas}, one can easily compute
$$
\Delta \RRR^{2} \, = \, \langle \nabla \RRR^{2}, \nabla f \rangle + 2 |\nabla \RRR|^{2} + 4\,\lambda\,\RRR^{2} - 4\RRR|\Ric|^{2}
$$
and
\begin{align*}
\Delta|\Ric|^{2} \, = \,&\, \langle \nabla|\Ric|^{2}, \nabla f \rangle + 2 |\nabla \Ric|^{2} + 4\,\lambda\,|\Ric|^{2} - 4 \,\WWW_{ijkl}\hbox{$\overset{\circ}{\RRR}$}\,\hspace{-0.05cm}^{ik} \overset{\circ}{\RRR}\,\hspace{-0.05cm}^{jl} \\
&\,+ \tfrac{4}{(n-1)(n-2)}\left(\RRR^{3}-(2n-1)\RRR|\Ric|^{2}+2(n-1)\RRR_{ij}\RRR^{jk}\RRR^{i}_{\,k}\right) \,,
\end{align*}
where we have used the fact that $\WWW_{ijkl}\RRR^{ik}\RRR^{jl}=\WWW_{ijkl}\hbox{$\overset{\circ}{\RRR}$}\,\hspace{-0.05cm}^{ik} \overset{\circ}{\RRR}\,\hspace{-0.05cm}^{jl}$, since all the traces of the Weyl tensor vanish. Thus, we obtain

\begin{align}\label{eq7}
\Delta \left(\frac{|\Ric|^{2}}{\RRR^{2}}\right) \, = \, & \, \langle \nabla \left(\frac{|\Ric|^{2}}{\RRR^{2}}\right), \nabla f - \nabla \log \RRR^{2} \rangle + \frac{2}{R^{4}}\,|\,\RRR \,\nabla_{j} \RRR_{ik} - \RRR_{ik}\,\nabla_{j} \RRR\,|^{2} \\\nonumber
&\, +\, \frac{4}{\RRR^{3}}\left( \PPP - \RRR \,\WWW_{ijkl}\hbox{$\overset{\circ}{\RRR}$}\,\hspace{-0.05cm}^{ik} \overset{\circ}{\RRR}\,\hspace{-0.05cm}^{jl}\,\right)\,
\end{align}
where 
$$
\PPP \, = \, \tfrac{1}{(n-1)(n-2)}\left(\,\RRR^{4}-(2n-1)\RRR^{2}|\Ric|^{2}+2(n-1)\RRR\,\RRR_{ij}\RRR^{jk}\RRR^{i}_{\,k} + (n-1)(n-2)|\Ric|^{4} \,\right)\,.
$$
We will show that 
$$
\PPP \, \geq \, |\overset{\circ}{\Ric}|^{2} \left( |\overset{\circ}{\Ric}| - \tfrac{1}{\sqrt{n(n-1)}}\RRR\right)^{2}\,
$$
with equality if and only if either $\overset{\circ}{\Ric}=0$ or $|\overset{\circ}{\Ric}| = \tfrac{1}{\sqrt{n(n-1)}}\RRR$. 

Using the fact that $\overset{\circ}{\RRR}_{ik} = \RRR_{ik} - \tfrac{1}{n}\RRR g_{ik}$, one has the formula
$$
\RRR_{ij}\RRR^{jk}\RRR^{i}_{\,k} \,=\, \overset{\circ}{\RRR}_{ij}\overset{\circ}{\RRR}\hspace{-0.05cm}\,^{jk}\overset{\circ}{\RRR}\hspace{-0.05cm}\,^{i}_{\,k} + \tfrac{3}{n}\RRR |\Ric|^{2} - \tfrac{2}{n^{2}} \RRR^{3} \,.
$$
By applying the well known estimate (for the proof see, for instance, Huisken~\cite[Lemma 2.4]{huisk8}) 
$$
\overset{\circ}{\RRR}_{ij}\overset{\circ}{\RRR}\hspace{-0.05cm}\,^{jk}\overset{\circ}{\RRR}\hspace{-0.05cm}\,^{i}_{\,k} \,\geq \, -\tfrac{n-2}{\sqrt{n(n-1)}} |\overset{\circ}{\Ric}|^{3} \,,
$$
we obtain
\begin{equation}\label{stima1}
\RRR_{ij}\RRR^{jk}\RRR^{i}_{\,k} \,\geq\, -\tfrac{n-2}{\sqrt{n(n-1)}} |\overset{\circ}{\Ric}|^{3} + \tfrac{3}{n}\RRR |\Ric|^{2} - \tfrac{2}{n^{2}} \RRR^{3} \,,
\end{equation}
with equality if and only if either $\overset{\circ}{\Ric}=0$ or $|\overset{\circ}{\Ric}| = \tfrac{1}{\sqrt{(n(n-1)}}\RRR$. Using inequality~\eqref{stima1} and the fact that $|\Ric|^{2} = |\overset{\circ}{\Ric}|^{2}+\tfrac{1}{n} \RRR^{2}$, from the definition of $\PPP$, we get
\begin{eqnarray*}
\PPP \, &\geq& \, \tfrac{1}{(n-1)(n-2)}\left( (n-1)(n-2)|\overset{\circ}{\Ric}|^{4} -\tfrac{2(n-1)(n-2)}{\sqrt{n(n-1)}}\RRR\,|\overset{\circ}{\Ric}|^{3}+ \tfrac{n-2}{n} \RRR^{2}\, |\overset{\circ}{\Ric}|^{2} \right) \\
&=& |\overset{\circ}{\Ric}|^{2} \left( |\overset{\circ}{\Ric}| - \tfrac{1}{\sqrt{n(n-1)}}\RRR\right)^{2}\,.
\end{eqnarray*}
The proposition now follows from equality~\eqref{eq7}.

\end{proof}

\

Now, let $h=f-\log \RRR^{2}$. Following the notation in Petersen-Wylie~\cite{pw3}, from Proposition~\ref{propstima}, we get that, if $g$ is non--flat, then
\begin{align*}
\Delta_{h} \left(\frac{|\Ric|^{2}}{\RRR^{2}}\right) \, \geq \,   \frac{4}{\RRR^{3}}\left[ \,|\overset{\circ}{\Ric}|^{2} \left( |\overset{\circ}{\Ric}| - \tfrac{1}{\sqrt{n(n-1)}}\RRR\right)^{2} - \RRR \,\WWW_{ijkl}\hbox{$\overset{\circ}{\RRR}$}\,\hspace{-0.05cm}^{ik} \overset{\circ}{\RRR}\,\hspace{-0.05cm}^{jl}\,\right] \,,
\end{align*}
where $\Delta_{h}=\Delta-\nabla_{\nabla h}$. In order to estimate the term involving the Weyl curvature, instead of using the Cauchy--Schwarz inequality, we recall this refined inequality which was first proved by Huisken~\cite[Lemma 3.4]{huisk8}
\begin{lemma} We have the estimate
$$
|\WWW_{ijkl}\hbox{$\overset{\circ}{\RRR}$}\,\hspace{-0.05cm}^{ik} \overset{\circ}{\RRR}\,\hspace{-0.05cm}^{jl}| \, \leq \, \sqrt{\frac{n-2}{2(n-1)}} \,|\WWW| \,|\overset{\circ}{\Ric}|^{2} \,.
$$
\end{lemma}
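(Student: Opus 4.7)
My plan is to reduce the inequality to a Cauchy--Schwarz estimate in the space of algebraic $(0,4)$-curvature tensors. The key construction is to form, from $\overset{\circ}{\Ric}$, an auxiliary tensor carrying the Riemann symmetries, namely
$$
T_{ijkl} \,=\, \overset{\circ}{\RRR}_{ik}\overset{\circ}{\RRR}_{jl} \,-\, \overset{\circ}{\RRR}_{il}\overset{\circ}{\RRR}_{jk}.
$$
A direct check shows that $T$ is antisymmetric in $(i,j)$ and in $(k,l)$, symmetric under the pair swap $(ij) \leftrightarrow (kl)$, and satisfies the first Bianchi identity. Exploiting the antisymmetry of $\WWW$ in $(k,l)$ and relabeling dummies then gives
$$
\langle \WWW, T \rangle \,=\, 2\,\WWW_{ijkl}\,\overset{\circ}{\RRR}^{ik}\overset{\circ}{\RRR}^{jl}.
$$

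The next ingredient is the standard orthogonal decomposition of algebraic curvature tensors into their Weyl, traceless-Ricci, and scalar parts. Write $T = W_T + T_{\mathrm{Ric}} + T_{\mathrm{Scal}}$. The Ricci contraction
$$
g^{jl}T_{ijkl} \,=\, \overset{\circ}{\RRR}_{ik}\,\mathrm{tr}\,\overset{\circ}{\Ric}\, -\, (\overset{\circ}{\RRR}^{2})_{ik} \,=\, -\,(\overset{\circ}{\RRR}^{2})_{ik}
$$
uses precisely the trace-free property $\mathrm{tr}\,\overset{\circ}{\Ric}=0$, and by a further trace gives $\mathrm{Scal}_T = -|\overset{\circ}{\Ric}|^2$; this determines $T_{\mathrm{Ric}}$ and $T_{\mathrm{Scal}}$ explicitly as Kulkarni--Nomizu products with $g$. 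Since $\WWW$ is orthogonal to $T_{\mathrm{Ric}}$ and $T_{\mathrm{Scal}}$, we have $\langle \WWW, T \rangle = \langle \WWW, W_T \rangle$, and Cauchy--Schwarz yields
$$
\bigl|\,\WWW_{ijkl}\,\overset{\circ}{\RRR}^{ik}\overset{\circ}{\RRR}^{jl}\,\bigr| \,=\, \tfrac{1}{2}\,|\langle \WWW, T\rangle| \,\leq\, \tfrac{1}{2}\,|\WWW|\,|W_T|.
$$

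It remains to bound $|W_T|^2$. Working in an orthonormal basis diagonalizing $\overset{\circ}{\Ric}$ with eigenvalues $\mu_i$ (so $\sum_i \mu_i = 0$ and $\sum_i \mu_i^2 = |\overset{\circ}{\Ric}|^2$), a direct expansion gives $|T|^2 = 2(|\overset{\circ}{\Ric}|^4 - \sum_i \mu_i^4)$, while the squared norms of $T_{\mathrm{Ric}}$ and $T_{\mathrm{Scal}}$ follow from the classical identity that the squared norm of the Kulkarni--Nomizu product of a symmetric $2$-tensor $h$ with $g$ equals $4(n-2)|h|^2 + 4(\mathrm{tr}\,h)^2$. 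A short simplification produces
$$
|W_T|^2 \,=\, \Bigl(2 + \tfrac{2}{(n-1)(n-2)}\Bigr)|\overset{\circ}{\Ric}|^4 \,-\, \tfrac{2n}{n-2}\,\sum_i \mu_i^4,
$$
and the elementary bound $\sum_i \mu_i^4 \geq |\overset{\circ}{\Ric}|^4/n$ then yields $|W_T|^2 \leq \tfrac{2(n-2)}{n-1}\,|\overset{\circ}{\Ric}|^4$. Collecting constants gives the sharp factor $\sqrt{(n-2)/(2(n-1))}$. The main obstacle is the careful bookkeeping in the orthogonal decomposition---tracking factors through the Kulkarni--Nomizu identities and the norms of the two lower pieces---and verifying that the rather crude eigenvalue estimate (which does not even use $\sum_i \mu_i = 0$) is enough to recover Huisken's sharp constant.
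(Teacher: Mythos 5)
Your argument is correct, and every step checks out: the tensor $T_{ijkl}=\overset{\circ}{\RRR}_{ik}\overset{\circ}{\RRR}_{jl}-\overset{\circ}{\RRR}_{il}\overset{\circ}{\RRR}_{jk}$ does have all the algebraic curvature symmetries, $\langle \WWW,T\rangle=2\,\WWW_{ijkl}\overset{\circ}{\RRR}^{ik}\overset{\circ}{\RRR}^{jl}$, and the trace computations $\mathrm{Ric}(T)=-\overset{\circ}{\Ric}{}^{2}$, $\mathrm{Scal}(T)=-|\overset{\circ}{\Ric}|^{2}$ are right. I verified the norm bookkeeping: with $|h\owedge g|^{2}=4(n-2)|h|^{2}+4(\tr h)^{2}$ one gets $|T_{\mathrm{Ric}}|^{2}=\tfrac{4}{n-2}\bigl(\sum_{i}\mu_{i}^{4}-\tfrac{1}{n}|\overset{\circ}{\Ric}|^{4}\bigr)$ and $|T_{\mathrm{Scal}}|^{2}=\tfrac{2}{n(n-1)}|\overset{\circ}{\Ric}|^{4}$, which together with $|T|^{2}=2(|\overset{\circ}{\Ric}|^{4}-\sum_{i}\mu_{i}^{4})$ reproduces exactly your formula for $|W_{T}|^{2}$, and the Cauchy--Schwarz bound $\sum_{i}\mu_{i}^{4}\geq |\overset{\circ}{\Ric}|^{4}/n$ then gives $|W_{T}|^{2}\leq \tfrac{2(n-2)}{n-1}|\overset{\circ}{\Ric}|^{4}$ and the stated constant. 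Note, however, that the paper does not prove this lemma at all: it simply cites Huisken~\cite[Lemma 3.4]{huisk8}, so you have supplied a proof where the paper offers none. Your route also differs from Huisken's original one, which diagonalizes $\overset{\circ}{\Ric}$, writes the contraction as $\sum_{i,j}\WWW_{ijij}\mu_{i}\mu_{j}=-\tfrac12\sum_{i,j}\WWW_{ijij}(\mu_{i}-\mu_{j})^{2}$ using the trace identities $\sum_{j}\WWW_{ijij}=0$, and then applies Cauchy--Schwarz componentwise; your version is more conceptual, works entirely with the orthogonal decomposition of algebraic curvature tensors, and makes transparent why the constant is exactly the ratio $|W_{T}|/|T|$-type quantity it is. The only cosmetic caveat is to state once which norm convention you use for $4$-tensors ($|\WWW|^{2}=\WWW_{ijkl}\WWW^{ijkl}$), since the constant depends on it; with that convention everything is consistent.
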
 
Hence, from this inequality and the pinching assumption~\eqref{pinching}, one has
$$
|\overset{\circ}{\Ric}|^{2} \left( |\overset{\circ}{\Ric}| - \tfrac{1}{\sqrt{n(n-1)}}\RRR\right)^{2} - \RRR \,\WWW_{ijkl}\hbox{$\overset{\circ}{\RRR}$}\,\hspace{-0.05cm}^{ik} \overset{\circ}{\RRR}\,\hspace{-0.05cm}^{jl} \, \geq \, 0 \,.
$$
Thus, we have proved that
$$
\Delta_{h} \left(\frac{|\Ric|^{2}}{\RRR^{2}}\right) \,\geq\,0\,
$$
where $h=f-\log\RRR^{2}$, with equality if and only if either $\overset{\circ}{\Ric}=0$ or $|\overset{\circ}{\Ric}| = \tfrac{1}{\sqrt{n(n-1)}}\RRR$. We are now in the position to apply a Liouville type theorem proved by Petersen--Wylie~\cite[Theorem 4.2]{pw3} (see also Naber~\cite{na1})

\begin{teo}\label{liouville} Let $(M^{n},g)$ be a manifold with finite $h$--volume: $\int e^{-h}dV_{g}< +\infty$. If $u$ is a smooth function in $L^{2}(e^{-h}dV_{g})$ which is bounded below such that $\Delta_{h} u \geq 0 $, then $u$ is constant.
\end{teo}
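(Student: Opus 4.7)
The plan is to treat $\Delta_{h}$ as the formally self-adjoint operator associated with the weighted measure $d\mu := e^{-h} dV_{g}$. Indeed, for any compactly supported smooth function $v$, the standard integration by parts gives
$$
\int (\Delta_{h} u)\, v\, d\mu \,=\, -\int \langle \nabla u, \nabla v\rangle\, d\mu,
$$
so classical $L^{2}$ energy arguments apply verbatim once every integral is understood with respect to $d\mu$. My strategy is a cutoff + Caccioppoli-type estimate of exactly the sort that proves $L^{p}$-Liouville theorems for subharmonic functions on Euclidean space.

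First I would normalize $u$. Since $u$ is bounded below, choose a constant $C$ so that $\ti{u} := u + C \geq 0$; clearly $\Delta_{h} \ti{u} = \Delta_{h} u \geq 0$. The finite weighted-volume hypothesis is used crucially here: because $\int d\mu < \infty$ and $u \in L^{2}(d\mu)$, the inequality $(u+C)^{2} \leq 2u^{2} + 2C^{2}$ shows that $\ti{u} \in L^{2}(d\mu)$ as well. Without finite $h$-volume one could not legitimately reduce to the sign-definite case.

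Next I would fix $o \in M$ and a standard cutoff $\phi_{r} \in C^{\infty}_{c}(M)$ with $\phi_{r} \equiv 1$ on $B_{r}(o)$, $\phi_{r} \equiv 0$ off $B_{2r}(o)$, and $|\nabla \phi_{r}| \leq C/r$. Testing $\Delta_{h}\ti{u} \geq 0$ against the nonnegative function $\phi_{r}^{2}\,\ti{u}$ and integrating by parts gives
$$
0 \,\leq\, \int \phi_{r}^{2}\, \ti{u}\, \Delta_{h}\ti{u}\, d\mu \,=\, -\int \phi_{r}^{2}\,|\nabla \ti{u}|^{2}\, d\mu \,-\, 2\int \phi_{r}\,\ti{u}\,\langle \nabla \ti{u}, \nabla \phi_{r}\rangle\, d\mu.
$$
Applying the weighted AM--GM inequality to the cross term absorbs half of the gradient term on the right, and yields the Caccioppoli-type estimate
$$
\tfrac{1}{2}\int \phi_{r}^{2}\,|\nabla \ti{u}|^{2}\, d\mu \,\leq\, 2 \int \ti{u}^{2}\,|\nabla \phi_{r}|^{2}\, d\mu \,\leq\, \tfrac{2C^{2}}{r^{2}} \int_{M} \ti{u}^{2}\, d\mu.
$$
Letting $r \to \infty$, the right-hand side tends to zero since $\ti{u} \in L^{2}(d\mu)$, and monotone convergence forces $|\nabla \ti{u}| \equiv 0$, so $u$ is constant.

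The only genuine obstacle is making sure that the test function $\phi_{r}^{2}\,\ti{u}$ is legitimate and that no boundary terms at infinity spoil the integration by parts. The cutoff construction sidesteps this: every step in the computation takes place in the compactly supported setting, and the conclusion depends solely on two facts, both already in the hypotheses, namely $\ti{u} \in L^{2}(d\mu)$ and the decay rate $|\nabla \phi_{r}| = O(1/r)$ of a Lipschitz exhaustion. No curvature or completeness beyond geodesic completeness is needed, which is why the theorem applies cleanly to gradient shrinking solitons once one verifies that $\int e^{-h} dV_{g} < \infty$ for $h = f - \log \RRR^{2}$.
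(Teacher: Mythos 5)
Your argument is correct. The paper does not actually prove this statement---it is quoted from Petersen--Wylie---and your cutoff/Caccioppoli argument is precisely the standard proof of that result: the weighted integration by parts against $\phi_r^2(u+C)$, the absorption of the cross term (legitimate because $\int\phi_r^2|\nabla u|^2\,e^{-h}dV_g$ is finite, the integrand having compact support), and the use of finite $h$--volume to keep $u+C$ in $L^2(e^{-h}dV_g)$ are all in order. The one hypothesis you use beyond those literally stated is completeness of $(M,g)$, needed to construct cutoffs with $|\nabla\phi_r|\leq C/r$; this is implicit in the paper's application to complete shrinking solitons and is genuinely necessary, since the statement fails on, say, a bounded Euclidean ball.
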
 

To apply this theorem to the function $u=\frac{|\Ric|^{2}}{\RRR^{2}}$ with $h=f-\log\RRR^{2}$, we have to check that
$$
\int_{M}e^{-h}\,dV_{g} \,= \, \int_{M} \RRR^{2} \, e^{-f}\,dV_{g} \,<\, +\infty 
$$
and
$$
\int_{M}\frac{|\Ric|^{4}}{\RRR^{4}}\,e^{-h}\,dV_{g} \, = \, \int_{M} \frac{|\Ric|^{4}}{\RRR^{2}} \,e^{-f} \,dV_{g} \, < \, +\infty\,.
$$
Since, by assumption, $g$ has nonnegative Ricci curvature, we have that $|\Ric|\leq \RRR$. Hence, to estimate the above integrals it is enough to prove that $\RRR \in L^{2}(e^{-f}dV_{g})$. To this aim, we recall that H.-D. Cao and D. Zhou~\cite{caozhou} have proved that for any fixed origin $p\in M^{n}$, there exist two positive constants $c_{1}$ and $c_{2}$ so that, for every $q\in M^{n}$, we have
$$
\tfrac{1}{4}\big(r(q)-c_{1}\big)^{2} \,\leq\, f (q) \,\leq\, \tfrac{1}{4}\big(r(q)+c_{1}\big)^{2} \,,
$$
where $r(q)=\hbox{dist}(q,p)$, and
$$
\hbox{Vol}\big(B_{\rho}(p)\big) \,\leq\, c_{2} \, \rho^{n} \,,
$$
for sufficiently large $\rho$. Moreover, from the well known equation satisfied by gradient shrinking solitons (see Hamilton~\cite{hamilton9})
$$
\RRR+|\nabla f|^{2}-f=\hbox{const}
$$
we can estimate the scalar curvature with $f$ to conclude that
\begin{equation}\label{l2scalar}
\int_{M}\RRR^{2}\,e^{-f}\,dV_{g} \,<\,+\infty \,.
\end{equation}

As we have observed in Remark~\ref{rem2}, to get the integrability conditions, it is sufficient to assume $|\Ric|\leq c\, \RRR^{1+\alpha}$, for some constants $c>0$ and $\alpha\geq 0$, since on any gradient shrinking solitons, the scalar curvature $\RRR\in L^{p}(e^{-f}dV_{g})$, for every $1\leq p < \infty$.
 
\medskip
 
Thus, Theorem~\ref{liouville} implies that $\tfrac{|\Ric|^{2}}{\RRR^{2}}$ is constant, and from the proof of Proposition~\ref{propstima}, we get that $g$ is either Einstein or satisfies $|\overset{\circ}{\Ric}| = \tfrac{1}{\sqrt{n(n-1)}}\RRR$. Now, the pinching assumption~\eqref{pinching} implies that either $g$ is Einstein or has zero Weyl tensor. 

In the first case, since $\overset{\circ}{\Ric}=0$, we have that $M^{n}$ is compact. Moreover, from the pinching condition~\eqref{pinching}, we get that $|\WWW|^{2}\leq \tfrac{2}{n^{2}(n-1)(n-2)} \RRR^{2}\leq \tfrac{4}{n(n-1)(n-2)(n+1)}\RRR^{2}$, which implies that $g$ has positive curvature operator (see Huisken~\cite[Corallary 2.5]{huisk8}). Hence, from a theorem of Tachibana~\cite{tachib1} we conclude that $g$ has constant positive sectional curvature and $(M^{n},g)$ is a finite quotient of $\SS^{n}$.

On the other hand, if the Weyl tensor of $g$ vanishes, from the classification of locally conformally flat gradient shrinking solitons we obtain that if $g$ is non--flat and non--compact, then it must be a finite quotient of $\RR\times\SS^{n-1}$.

\medskip

This concludes the proof of Theorem~\ref{mainteo}.

\bigskip

\bigskip

\begin{ackn}
\noindent The author is partially supported by the Italian project FIRB--IDEAS ``Analysis and Beyond''.
\end{ackn}

\bigskip

\bigskip

\bibliographystyle{amsplain}
\bibliography{/Users/giovannicatino/Documents/Matematica/Ricerca/Bib/biblio.bib}

\bigskip

\bigskip

\parindent=0pt

\end{document}